\documentclass[a4paper,12pt]{amsart}
\usepackage{amsmath,mathrsfs,amssymb,xcolor,MnSymbol}
\usepackage[top=3cm,bottom=3cm,outer=3cm,inner=3cm,marginpar=3cm]{geometry}
\usepackage[utf8]{inputenc}

\usepackage{hyperref}
\hypersetup{
	colorlinks = true,
	linkcolor = red,
	anchorcolor = black,
  	citecolor = blue,
	filecolor = red,
	urlcolor = magenta,
	pdfauthor=author
	menucolor = red
}

\newtheorem{theorem}{Theorem}[section]

\newtheorem{lemma}[theorem]{Lemma}
\newtheorem{corollary}[theorem]{Corollary}
\newtheorem*{corollary*}{Corollary}
\newtheorem{proposition}[theorem]{Proposition}

\theoremstyle{definition}

\newtheorem{remark}[theorem]{Remark}

\numberwithin{equation}{section}

\newcommand{\bB}{{\mathbb B} }

\newcommand{\paren}[1]{\left(#1\right)}

\newcommand{\pd}[2]{\frac{\partial#1}{\partial#2}}
\newcommand{\abs}[1]{\left\vert#1\right\vert}
\newcommand{\norm}[1]{\left\|#1\right\|}

\newcommand{\inner}[1]{\left\langle{#1}\right\rangle}

\newcommand{\set}[1]{\left\{#1\right\}}

\newcommand{\ip}[1]{\mathrm{Im}\;s}

\def\ol#1{{\overline{#1}}}

\def\ii{\mathbf{i}}

\def\sL{\mathsf{L}}

\def\KE{K\"ahler-Einstein }
\def\KM{\mathsf{M}}
\def\Hom{\mathrm{Hom}}

\def\RR{\mathbb{R}} 
\def\CC{\mathbb{C}} 
\def\DD{\mathbb{D}} 
\def\NN{\mathbb{N}} 

\def\iddbar{\ii\partial\bar\partial} %
\def\ddc{{dd^c}} %

\def\ric{\mathrm{Ric}}

\def\del{\partial}

\def\sL{\mathsf{L}}

\begin{document}
\title[K\"ahler Hyperbolicity Modulus]{K\"ahler Hyperbolicity Modulus for Simply-connected K\"ahler Hyperbolic manifolds}

\author{Young-Jun Choi}
\address{Department of Mathematics and Institute of Mathematical Science, Pusan National University, 2, Busandaehak-ro 63beon-gil, Geumjeong-gu, Busan, 46241, Republic of Korea}
\email{youngjun.choi@pusan.ac.kr}

\author{Kang-Hyurk Lee}
\address{Department of Mathematics and Research Institute of Natural Science, Gyeongsang National University, Jinju, Gyeongnam, 52828, Republic of Korea}
\email{nyawoo@gnu.ac.kr}

\subjclass[2020]{32Q15, 32Q45, 53C55, 32M15}
\keywords{K\"ahler hyperbolicity modulus, K\"ahler hyperbolic manifolds, K\"ahler-Einstein metric, Bergman metric, bounded symmetric domains}

\thanks{The first named author was supported by the National Research Foundation of Korea (NRF) grant funded by the Korea government (No.~NRF-2023R1A2C1007227). 
The authors were supported by Samsung Science and Technology Foundation under Project Number SSTF-BA2201-01. }

\begin{abstract}
This paper investigates the K\"ahler hyperbolicity modulus on complete K\"ahler manifolds, with a particular focus on hyperconvex domains and bounded strongly pseudoconvex domains. 
Our main result establishes a lower bound for the K\"ahler hyperbolicity modulus in terms of the boundary behavior of the gradient length of a plurisubharmonic function.
As applications, we compute the K\"ahler hyperbolicity modulus for bounded symmetric domains.
Furthermore, we obtain lower bounds for the K\"ahler hyperbolicity modulus on bounded strongly pseudoconvex domains equipped with K\"ahler-Einstein metrics or Bergman metrics.
\end{abstract}

\maketitle

\section{Introduction}
A complete K\"ahler manifold $X$ is called to be \emph{K\"ahler hyperbolic} if its K\"ahler  form $\omega$ is \emph{$d$-bounded}, that is,  there exists a real $1$-form $\eta$ on $X$ such that 
\begin{equation*}
	d\eta=\omega
	\quad\text{and}\quad
	\norm{\eta}_{L^\infty}
	:=
	\sup_X \abs{\eta}_\omega<+\infty
\end{equation*}
where $\abs{\eta}_\omega$ denotes the pointwise length of $\eta$ with respect to $\omega$ (refer to \cite{Gromov1991} for the original definition). 

Due to \cite{Gromov1991}, every K\"ahler hyperbolic manifold satisfies the strong $L^2$-Lefschetz vanishing theorem:
\begin{equation*}
\dim\mathcal H^{p,q}_{(2)}(X) = 0 \quad \text{for } p+q \neq n,
\end{equation*}
where $\mathcal H^{p,q}_{(2)}(X)$ denotes the space  of square integrable harmonic $(p,q)$-forms with respect to $\omega$ and $n$ is the complex dimension of $X$ (see also \cite{Donnelly1994}). 

In \cite{Gromov1991}, this vanishing theorem was proved by deriving a lower bound for the spectrum of the Hodge Laplacian.
More precisely, this lower bound is expressed as a uniform constant, depending only on the degree and the dimension, multiplied by the reciprocal of the $L^\infty$-norm of the $1$-form $\eta$ (see \cite{Cho_Choi_Lee2026Ar} for explicit formulae). 
For instance, for any smooth function $\varphi$ on $X$ with compact support, we have
\begin{equation*}
    \frac{\int_X \abs{d\varphi}_\omega^2 dV}{\int_X \varphi^2dV}
    \ge
    \frac{n^2}{4\norm{\eta}^2_{L^\infty}}.
\end{equation*}
This inequality implies that $(X,\omega)$ has positive spectrum.
Specifically, the bottom of the $L^2$-spectrum of the Laplace–Beltrami operator, denoted by $\lambda_0$, satisfies
\begin{equation}\label{eqn:bottom_spectrum_estimate_eta}
	\lambda_0\ge \frac{n^2}{4\norm{\eta}^2_{L^\infty}}
\end{equation}
for any bounded $1$-form $\eta$ with $d\eta=\omega$. 
We now define the \emph{K\"ahler hyperbolicity modulus} of $(X,\omega)$ by
\begin{equation*}
	\KM:=\KM(X,\omega)
	:=\inf\set{\norm{\eta}^2_{L^\infty}:\eta\;\;\text{is a 1-form s.t.}\;\omega=d\eta}
\end{equation*}
Given $\KM>0$, we can sharpen \eqref{eqn:bottom_spectrum_estimate_eta} as follows:
\begin{equation}
\label{eqn:bottom_spectrum_estimate}
	\lambda_0\ge\frac{n^2}{4\KM}.
\end{equation}
The K\"ahler hyperbolicty modulus is a K\"ahler invariant which measures the degree of hyperbolicity of $(X,\omega)$: smaller values of $L$ correspond to stronger hyperbolic behavior.

It is remarkable that the estimate \eqref{eqn:bottom_spectrum_estimate} is sharp. 
More precisely, let the unit ball $\bB^n$ in $\CC^n$ equipped with the K\"ahler-Eistein metric $\omega$ with Ricci curvature $-(n+1)$. 
In this case, the K\"aher hyperbolicity modulus satisfies $\KM=1/2$ (see Section~\ref{sec:main_theorem}).
It then follows from~\eqref{eqn:bottom_spectrum_estimate} that
\begin{equation*}
	\lambda_0(\bB^n,\omega)\ge\frac{n^2}{4\KM}=\frac{n^2}{2}.
\end{equation*}
On the other hand, it is well that $\lambda_0(\bB^n,\omega)=n^2/2$ (see, e.g.,~\cite{Chavel-Book}).
Hence the estimate is sharp.

For a general $(X,\omega)$, it is not easy to get an explicit value of $\KM(X,\omega)$. If one has a concrete $1$-form $\eta$ and compute $\norm{\eta}_{L^\infty}$, then this immediately provides an upper bound of $\KM$: 
\begin{equation*}
	\KM\leq \norm{\eta}_{L^\infty}.
\end{equation*}
On the other hand, under a certain negativity condition of curvature, one can obtain an upper bound. 
More precisely, let $(X,\omega)$ be a simply connected K\"ahler manifold whose Riemannian sectional curvature is bounded from above by a negative constant $-K^2$.
Then there is a primitive $\eta$ of $\omega$ such that
\begin{equation*}
	\norm{\eta}_{L^\infty}^2\le\frac{n^2}{K}.
\end{equation*}
(See~\cite{Ballmann-Book}.)
In contrast, \eqref{eqn:bottom_spectrum_estimate} gives a lower bound of $\KM$ provided that the bottom of the $L^2$-spectrum is known. But the explicit computation of $\lambda_0$ is, in general, also a difficult problem.

%

\medskip

In this paper, we investigate the K\"ahler hyperbolic modulus on simply-connected complete K\"ahler manifolds, with particular emphasis on bounded pseudoconvex domains. 
More precisely, our aim is to find a lower bound of the K\"ahler hyperbolicity modulus by analyizing the boundary behavior of the plurisubharmonic exhaustion function.

\section{Lower bounds for K\"ahler hyperbolicity modulus}
\label{sec:main_theorem}
In this section, we prove the main theorem. 
To this end, we first introduce the following definition.

For a manifold $X$, a sequence $(x_j)\subset X$ is said to be \emph{compactly divergent}, written $x_j\to\infty$, if for any compact subset $K\subset X$, there is a positive integer $N$ such that $x_j\in X\setminus K$ for all $j\geq N$. 
For a function $f:X\to\RR$, put
\begin{equation*}
\liminf_{x\to\infty} f(x) = \inf \paren{\liminf_{j\to\infty} f(x_j)}
\end{equation*}
where the infimum is taken over all compactly divergent sequences of $X$.

\begin{theorem}
\label{thm:main_theorem}
Let $(X,\omega)$ be a simply-connected manifold equipped with a complete K\"ahler metric.
Suppose that $\omega$ admits a global potential $g:X\rightarrow\RR$ such that $\ddc g=\omega$, satisfying the following conditions:
\begin{enumerate}
\item $g$ is an exhaustion function on $X$;
\item There exists $C>0$ such that 
\begin{equation*}
\frac{1}{2}\liminf_{x\rightarrow\infty}\abs{\partial g}^2_\omega(x)
=
\liminf_{x\rightarrow\infty}\abs{d^cg}^2_\omega(x)\ge C.
\end{equation*}
\end{enumerate}
Then, for any  global $1$-form $\eta$ satisfying $d\eta=\omega$, we have
\begin{equation*}
	\norm{\eta}_{L^\infty}^2\ge C.
\end{equation*}
In particular, the K\"ahler hyperbolicity modulus satisfies $\KM(X,\omega)\ge C$.
\end{theorem}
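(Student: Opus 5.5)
The plan is to compare $\eta$ with the explicit primitive $d^cg$ by integrating both against $\omega^{n-1}$ over the boundaries of the sublevel sets of $g$. Stokes' theorem forces the two boundary integrals to agree, and a pointwise linear-algebra bound then compares them with the area of the boundary.

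\textbf{Step 1 (sublevel sets).} For $t\in\RR$ put $\Omega_t=\set{g<t}$. Since $g$ is an exhaustion function, each $\Omega_t$ is relatively compact. By Sard's theorem, almost every $t$ is a regular value of $g$. For such $t$, the boundary $\partial\Omega_t=\set{g=t}$ is a compact smooth real hypersurface, oriented as the boundary of $\Omega_t$.

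\textbf{Step 2 (Stokes).} Since $d\omega=0$, both $d\eta=\omega$ and $d(d^cg)=\ddc g=\omega$ give
\begin{equation*}
\int_{\partial\Omega_t}\eta\wedge\omega^{n-1}
=\int_{\Omega_t}\omega^{n}
=\int_{\partial\Omega_t}d^cg\wedge\omega^{n-1}.
\end{equation*}
Simple connectivity is not needed here: Stokes is applied to each primitive separately.

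\textbf{Step 3 (pointwise identity on the boundary).} Let $\nu=\nabla g/\abs{\nabla g}$ be the outward unit normal to $\partial\Omega_t$, let $J$ be the complex structure, and let $d\sigma$ be the induced Riemannian area form on $\partial\Omega_t$. I will verify in a unitary frame adapted to $\nu$ and $J\nu$ that, for any real $1$-form $\alpha$,
\begin{equation*}
\alpha\wedge\omega^{n-1}\big|_{\partial\Omega_t}=\pm (n-1)!\,\alpha(J\nu)\,d\sigma,
\end{equation*}
with a sign that depends only on the orientation conventions and not on $\alpha$.

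Two consequences follow from this identity:
\begin{itemize}
\item For $\alpha=\eta$, the pointwise absolute value of the integrand is at most $(n-1)!\abs{\eta}_\omega\,d\sigma$.
\item For $\alpha=d^cg$, the form $d^cg$ is, up to sign, $dg\circ J$. Since $dg$ is proportional to $\nu^\flat$, we get $d^cg(J\nu)=\pm\abs{d^cg}_\omega$. So for $\alpha=d^cg$ the bound is attained with equality, and the integrand has constant sign; after fixing orientation it equals $(n-1)!\abs{d^cg}_\omega\,d\sigma$. Indeed, its integral equals $\int_{\Omega_t}\omega^n>0$.
\end{itemize}
The relation $\abs{d^cg}^2=\tfrac12\abs{\partial g}^2$ in the hypothesis fixes the normalization of $d^c$. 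With that normalization I only need the proportionality and equality case above, not exact constants.

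\textbf{Step 4 (conclusion).} Combining Steps 2 and 3 gives
\begin{equation*}
\int_{\partial\Omega_t}\abs{d^cg}_\omega\,d\sigma\le\norm{\eta}_{L^\infty}\,\mathrm{Area}(\partial\Omega_t).
\end{equation*}
Fix $\epsilon>0$. By hypothesis (2) there is a compact set $K$ with $\abs{d^cg}^2_\omega\ge C-\epsilon$ on $X\setminus K$; otherwise we could pick a compactly divergent sequence violating the $\liminf$. Because $g$ is an exhaustion, $\partial\Omega_t\subset X\setminus K$ once $t>\max_K g$. For such a regular value $t$ the left side is at least $\sqrt{C-\epsilon}\,\mathrm{Area}(\partial\Omega_t)$, and this area is positive. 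Dividing by the area gives $\norm{\eta}_{L^\infty}\ge\sqrt{C-\epsilon}$, and letting $\epsilon\to0$ yields $\norm{\eta}^2_{L^\infty}\ge C$. Taking the infimum over all primitives $\eta$ gives $\KM\ge C$.

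\textbf{Main obstacle.} The main point requiring care is the pointwise identity in Step 3. In particular one must check that the bound for general $\alpha$ and the equality for $\alpha=d^cg$ involve the same constant and the same orientation. I would handle this by choosing an orthonormal frame $e_1=\nu$, $e_2=J\nu$, $e_3,\dots,e_{2n}$ with $Je_{2k-1}=e_{2k}$, and computing the contraction $\iota_\nu(\alpha\wedge\omega^{n-1})$ directly.
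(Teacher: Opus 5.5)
Your proof is correct, and it takes a genuinely different route from the paper's.

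\textbf{The paper's route.} The paper uses simple connectivity to write $\eta=df+d^cg$ globally. It then takes a maximum point $p_j$ of $f$ on the compact level set $\{g=j\}$. The Lagrange condition $df(p_j)=\lambda_j\,dg(p_j)$, together with $dg\perp d^cg$, gives the pointwise identity $\abs{\eta}^2_\omega=\abs{df}^2_\omega+\abs{d^cg}^2_\omega\ge\abs{d^cg}^2_\omega$ at $p_j$. Letting $j\to\infty$ finishes the proof.

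\textbf{Your route.} Your Stokes argument replaces this pointwise maximum principle by an averaged statement. For the closed form $\theta=\eta-d^cg$ you get $\int_{\{g=t\}}\theta(J\nu)\,d\sigma=0$. This is the integrated analogue of the paper's $df(J\nu)=0$ at $p_j$; note that $J\nu$ is tangent to the level set.

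\textbf{What each approach buys.} Your argument uses only that $\eta-d^cg$ is closed, never that it is exact. So the simple-connectivity hypothesis can be dropped from the theorem entirely; your remark in Step 2 applies to the whole proof, not just that step. You also get, at every regular level, $\norm{\eta}_{L^\infty}$ bounded below by the $d\sigma$-average of $\abs{d^cg}_\omega$ over $\{g=t\}$. This dominates the minimum of $\abs{d^cg}_\omega$ on that level, which is effectively what the paper's argument controls. The paper's route, in exchange, is pure pointwise linear algebra with no orientation bookkeeping, and it produces a specific point on each level set where $\abs{\eta}_\omega\ge\abs{d^cg}_\omega$.

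\textbf{Two small simplifications.} Sard's theorem is unnecessary once $t>\max_K g$, since hypothesis (2) forces $dg\neq0$ on $\{g=t\}$. The positivity of $\mathrm{Area}(\partial\Omega_t)$ follows directly from $\int_{\partial\Omega_t}d^cg\wedge\omega^{n-1}=\int_{\Omega_t}\omega^n>0$.
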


\begin{proof}
Since $\eta$ and $d^cg$ satisfy 
\begin{equation*}
d\eta=\omega=d(d^cg),
\end{equation*} 
the difference $\eta-d^cg$ is $d$-closed.
Since $X$ is simply connected, $\pi_1(X)=0$.
By the de Rham theorem and the identification $H^1(X,\RR)\cong\Hom(\pi_1(X),\RR)$, it follows that $H^1_{dR}(X)=0$.
Hence there exists a smooth function $f:X\rightarrow\RR$ such that
\begin{equation*}
	\eta=df+d^cg.
\end{equation*}
Since the level set $L_j=\set{z\in X:g(z)=j}$ is compact, for each $j\in\NN$
there exists a point $p_j\in L_j$ such that
\begin{equation*}
	\sup_{L_j}f=f(p_j).
\end{equation*}
By the method of Lagrange multipliers, there exists $\lambda_j\in\RR$ such that 
$$df(p_j)=\lambda_j dg(p_j).
$$
We compute
\begin{align*}
	\abs{\eta}^2_\omega
	=
	\inner{df+d^cg,df+d^cg}
	=
	\abs{df}_\omega^2+2\inner{d^cg,df}+\abs{d^cg}_\omega^2.
\end{align*}
Evaluating this identity at $p_j$, we obtain 
\begin{equation*}
	\abs{\eta}^2_\omega
	=
	\abs{df}_\omega^2+2\lambda_j\inner{d^cg,dg}+\abs{d^cg}_\omega^2
\end{equation*}
Since $d^cg=J^*dg$ and $\omega$ is hermitian, the forms $dg$ and $d^cg$ are orthogonal.
Hence the mixed term vanishes, and
\begin{equation*}
	\abs{\eta}^2_\omega
	=
	\abs{df}_\omega^2+\abs{d^cg}_\omega^2
	\quad\text{at $p_j$}.
\end{equation*}
Consequently,
\begin{equation*}
\norm{\eta}_{L^\infty}^2\ge \liminf_{x\rightarrow\infty}\abs{\eta}^2_\omega(x)\ge C.
\end{equation*}
This completes the proof.
\end{proof}

We now illustrate Theorem~\ref{thm:main_theorem} by considering the unit ball equipped with its complete K\"ahler-Einstein metric.

Let $\bB^n=\set{z\in\CC^n: \abs{z}<1}$ be the unit ball, where $z=(z^1,\ldots,z^n)$ is the complex coordinates of $\CC^n$ and $\abs{z}^2 = \sum\abs{z^\alpha}^2$. 
The complete \KE metric of $\bB^n$ has the K\"ahler form $\omega=\sqrt{-1} h_{\alpha\bar\beta}dz^\alpha\wedge dz^{\bar\beta}$  given by
\begin{equation*}
h_{\alpha\bar\beta}
	=\frac{1}{\paren{1-\abs{z}^2}^2}
	\paren{
		\delta_{\alpha\bar\beta}(1-\abs{z}^2)
		+z^{\bar\alpha}z^\beta
	}\;.
\end{equation*}
It satisfies the Einstein condition $\mathrm{Ric}(\omega)=-(n+1)\omega$ and has the explicit value of $\lambda_0$:
\begin{equation*}
\lambda_0=\frac{n^2}{2}.
\end{equation*}
Let us consider the potential function 
\begin{equation}\label{eqn:model potential}
g=-\log(1-\abs{z}^2)
\end{equation}
of $\omega$, that is, $dd^cg=\omega$. One can easily see that
\begin{equation*}
\abs{d^c g}_\omega^2 = \frac{1}{2} \abs{z}^2 \quad\text{for $z\in\bB^n$}
\end{equation*}
so $\eta=d^c g$ satisfies $d\eta=\omega$ and $\norm{\eta}^2_{L^\infty}=1/2$. 
Thus the K\"ahler hyperbolicity modulus $\KM=\KM(\bB^n,\omega)$ is not greater than $1/2$:
\begin{equation*}
\KM\leq \frac{1}{2}.
\end{equation*}

On the other hand, the potential function $g$ satisfies the hypothesis in Theorem~\ref{thm:main_theorem}.
Hence for any $1$-form $\eta$ with $d\eta=\omega$ we have
\begin{equation*}
	\norm{\eta}^2_{L^\infty}\ge\frac{1}{2}.
\end{equation*}
by Theorem~\ref{thm:main_theorem}, which yields that $\KM\ge1/2$.
Therefore, $\KM=1/2$.
\medskip


An immediate corollary also follows.

\begin{corollary}
\label{cor:main_corollary}
Under the same hypotheses as in Theorem~\ref{thm:main_theorem}, assume further that
\begin{equation*}
\liminf_{x\rightarrow\infty}\abs{d^cg}^2_\omega(x)
=
\norm{d^cg}^2_{L^\infty}.
\end{equation*}
Then we have
\begin{equation*}
	\KM(X,\omega)=\norm{d^cg}^2_{L^\infty}.
\end{equation*}
\end{corollary}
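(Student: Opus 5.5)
The equality will follow from two opposite inequalities, each essentially immediate from what precedes.

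For the upper bound, I take $\eta := d^c g$ itself as a competitor in the definition of $\KM(X,\omega)$. Since $\ddc g = \omega$, we have $d\eta = d(d^c g) = \omega$, so $\eta$ is an admissible primitive. Hence
\begin{equation*}
\KM(X,\omega) \le \norm{d^c g}^2_{L^\infty}.
\end{equation*}
If $\norm{d^c g}_{L^\infty} = +\infty$ there is nothing to prove on this side. In that case, though, the lower bound below forces $\KM = +\infty$ too, with the convention that the infimum over an empty set of bounded primitives is $+\infty$.

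For the lower bound, I would apply Theorem~\ref{thm:main_theorem} with the constant
\begin{equation*}
C := \liminf_{x\to\infty}\abs{d^c g}^2_\omega(x) = \norm{d^c g}^2_{L^\infty}.
\end{equation*}
Hypothesis (1) and the simple connectivity of $X$ are assumed. Hypothesis (2) holds with this $C$ by construction, using the identity $\abs{d^c g}^2_\omega = \frac12 \abs{\partial g}^2_\omega$ recorded in the theorem's statement. Theorem~\ref{thm:main_theorem} then gives $\norm{\eta}^2_{L^\infty} \ge C$ for every primitive $\eta$ of $\omega$. Taking the infimum yields $\KM(X,\omega) \ge \norm{d^c g}^2_{L^\infty}$.

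The only point needing care is that Theorem~\ref{thm:main_theorem} requires $C>0$. If $C=0$, the lower bound $\KM \ge 0$ is trivial, so the equality still holds.
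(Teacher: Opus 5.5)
Your proof is correct and matches what the paper intends when it calls this an immediate corollary (it gives no written proof). Taking $\eta=d^cg$ gives $\KM(X,\omega)\le\norm{d^cg}^2_{L^\infty}$, and Theorem~\ref{thm:main_theorem} with $C=\liminf_{x\to\infty}\abs{d^cg}^2_\omega(x)$ gives the reverse inequality. Your separate treatment of $C=0$ is unnecessary, since the theorem's hypotheses already bound this $\liminf$ below by a positive constant.
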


\section{Application 1}

In this section, we compute the K\"ahler hyperbolicity modulus for the bounded symmetric domains and simply-connected bounded strongly pseudoconvex domains as an application of Theorem~\ref{thm:main_theorem}.

\subsection{Bounded symmetric domains}

Let $\Omega$ be an irreducible bounded symmetric domain and let $N_\Omega$ be its generic norm. 
Then the Bergman kernel $K_\Omega$ of $\Omega$ is of the form 
\begin{equation*}
K_\Omega(z,w) 
	= c N_\Omega(z,w)^{-c_\Omega}
\end{equation*}
for a normalizing constant $c$ by the Euclidean volume of $\Omega$ and the genus $c_\Omega$ of $\Omega$ which is a positive integer. 
The Bergman metric $\omega=dd^c\log K$ is the complete \KE metric with Ricci curvature $-1$, that is, $\ric(\omega)=-\omega$. 
In~\cite{Choi_Lee_Seo2025Ar}, the invariant $\sL_\Omega$ of $(\Omega,\omega)$ is defined by
\begin{equation*}
\sL_\Omega = \sqrt{rc_\Omega}
\end{equation*}
where $r$ is the rank of $\Omega$, the dimension of a maximal totally geodesic polydisc in $\Omega$. 
(In fact, this is called the \emph{K\"ahler-hyperbolicity length} in~\cite{Choi_Lee_Seo2025Ar}.) 
If a bounded symmetric domain $\Omega$ is biholomorphic to a product $\Omega_1\times\cdots\times\Omega_s$ of irreducible bounded symmetric domains, then we set
\begin{equation*}
\sL_\Omega 
	= \paren{\sum_{j=1}^s \sL_{\Omega_j}^2}^{1/2} 
	=\paren{ \sum_{j=1}^s r_j c_{\Omega_j} }^{1/2}
\end{equation*}
where each $r_j$ and $c_{\Omega_j}$ are the rank and the genus of $\Omega_j$.

The next theorem says that the K\"ahler hyperbolicity modulus in this paper essentially generalize the one in~\cite{Choi_Lee_Seo2025Ar}.
\begin{theorem}
\label{thm:main_application}
Let $\Omega$ be a bounded symmetric domain and $\omega$ be its Bergman metric.
For any global $1$-form $\eta$ with $\omega=d\eta$, we have
\begin{equation}
\label{eqn:lower_bound_eta}
	\norm{\eta}^2_{L^\infty}\ge\frac{\sL^2_\Omega}{2},
\end{equation}
In particular, the K\"ahler hyperbolic modulus of $(\Omega,\omega_B)$ is given by
\begin{equation*}
	\KM(\Omega,\omega_B)=\frac{\sL^2_\Omega}{2}.
\end{equation*}
\end{theorem}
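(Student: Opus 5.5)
We will apply Theorem~\ref{thm:main_theorem} and Corollary~\ref{cor:main_corollary} with the potential $g=\log K_\Omega(z,z)$. Bounded symmetric domains are convex in their Harish-Chandra realization, so $\Omega$ is simply connected. The Bergman metric is complete, and $dd^c g=\omega_B$. Since $\Omega$ is a product of irreducible factors and everything splits, both sides of the claim are additive over factors. Indeed, $g=\sum_j g_j$ and $\omega=\sum_j\omega_j$, so
\begin{equation*}
\abs{d^cg}^2_\omega=\sum_j\abs{d^cg_j}^2_{\omega_j},
\end{equation*}
and $\sL_\Omega^2=\sum_j\sL^2_{\Omega_j}$. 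The global computation is additive, but the boundary liminf requires some care. A compactly divergent sequence only needs one factor to diverge, so we really need a pointwise bound rather than a liminf bound factorwise.

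So we first treat an irreducible $\Omega$ of rank $r$ with $g=-c_\Omega\log N_\Omega(z,z)+\text{const}$. By $\mathrm{Aut}(\Omega)$-equivariance, and since $K_\Omega$ changes by $|\det J|^2$ so that $d^cg$ changes by an exact form, we cannot use transitivity directly for $d^cg$. Instead we will use $K$-invariance: $N(z,z)$ is invariant under the isotropy group $K$. By the polar decomposition, every $z$ equals $k\cdot\sum_i t_ie_i$ with $0\le t_i<1$ along a maximal polydisc $\Delta^r$ spanned by a frame of orthogonal tripotents $e_i$. On this polydisc, $N(z,z)=\prod_i(1-t_i^2)$ and $\omega$ restricts to $c_\Omega$ times the Poincaré metric on each disc factor. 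Moreover, $dg$ at such a point is tangent to the polydisc, by $K$-invariance and symmetry of the torus action on the normal directions. Hence $\abs{d^cg}^2_\omega$ reduces to the one-variable computation already done for the ball, with weight $c_\Omega$ from the genus. The computation gives
\begin{equation*}
\abs{d^cg}^2_\omega=\frac{c_\Omega}{2}\sum_{i=1}^r t_i^2 .
\end{equation*}
We will check this normalization against the ball case, where $r=1$ and $c_\Omega=n+1$. For the metric with Ricci curvature $-(n+1)$ the ball value is $\tfrac12\abs{z}^2$, and after rescaling to Ricci curvature $-1$ this gives $\tfrac{n+1}{2}\abs{z}^2$.

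From this formula we get the upper bound: the supremum is $rc_\Omega/2=\sL^2_\Omega/2$, so $\KM\le\sL_\Omega^2/2$. For the lower bound, the hypothesis of Theorem~\ref{thm:main_theorem} fails, since the boundary points with only some $t_i\to1$ give a liminf of only $c_\Omega/2$. We therefore cannot quote the theorem verbatim. Instead we redo its proof with the specific level sets $L_j=\{g=j\}$. The maximum point $p_j$ of $f$ on $L_j$ gives $\abs{\eta}^2\ge\abs{d^cg}^2(p_j)$, but $p_j$ is not under our control. This is the main obstacle.

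To get around it, we will choose a better exhaustion. We replace $g$ by a function $\tilde g$ with $d^c\tilde g-d^cg$ exact, or directly use a different $\eta_0$ adapted to the product structure. Concretely, restrict $\eta$ to a maximal polydisc $\Delta^r$, which is totally geodesic and holomorphic. Then $\iota^*\eta$ is a primitive of $\iota^*\omega=c_\Omega\sum_i\omega_{P,i}$, and $\abs{\iota^*\eta}\le\abs{\eta}$. On $\Delta^r$ we apply Theorem~\ref{thm:main_theorem} with the potential
\begin{equation*}
\tilde g=-c_\Omega\sum_i\log(1-\abs{w_i}^2).
\end{equation*}
Again a liminf at partial boundaries is too weak, so we use the proof's mechanism with the exhaustion $h=\max$-type, or better the sum of squares. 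Alternatively, we exploit the product structure with a Lagrange argument on the torus-invariant sets $\{\abs{w_1}=\cdots=\abs{w_r}=s\}$. Along the diagonal curve, $f$ attains its maximum over this compact torus $T_s$ at some $p_s$, where $df$ annihilates $T_s$'s tangent directions. Those directions are exactly the $d^c\tilde g$-dual directions, namely the angular directions $\partial_{\theta_i}$, since $d^c\tilde g_i$ pairs with $\partial_{\theta_i}$ only. The cross term $\inner{df,d^c\tilde g}$ then vanishes at $p_s$ because $d^c\tilde g$ is a combination of $d\theta_i$ metrically dual to $\partial_{\theta_i}$ by the product structure. Therefore
\begin{equation*}
\abs{\eta}^2(p_s)\ge\abs{d^c\tilde g}^2(p_s)=\frac{c_\Omega}{2}\,r s^2\to\frac{rc_\Omega}{2}.
\end{equation*}
For products we take the torus of all factors' polydiscs combined, which gives $\sum_j r_jc_{\Omega_j}/2=\sL_\Omega^2/2$. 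This proves \eqref{eqn:lower_bound_eta}, and equality of $\KM$ follows as in Corollary~\ref{cor:main_corollary}.
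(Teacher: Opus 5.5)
Your proof is correct, but it differs from the paper's in both halves.

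\textbf{Lower bound.} The paper restricts $\eta$ to a single totally geodesic holomorphic disc of curvature $-2/\sL^2_\Omega$ (the diagonal of a maximal polydisc). On that disc the potential $-\frac{2}{\kappa}\log(1-\abs{z}^2)$ has $\abs{d^cg}^2\to\sL^2_\Omega/2$ uniformly at the boundary, so Theorem~\ref{thm:main_theorem} applies verbatim. You keep the whole polydisc, where the liminf hypothesis fails at partial boundary points, and you replace the level sets of $\tilde g$ by the tori $T_s$. This is legitimate: the proof of Theorem~\ref{thm:main_theorem} only uses that, at a maximum point of $f$ on some compact submanifold, the metric dual of $d^c\tilde g$ is tangent to that submanifold. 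Here that dual is a combination of the $\partial_{\theta_i}$. The two arguments are close cousins, since the paper's circles in the diagonal disc are subtori of your $T_s$. Yours isolates a slightly more general form of the mechanism, while the paper's keeps Theorem~\ref{thm:main_theorem} as a black box.

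\textbf{Upper bound.} The paper cites Kai--Ohsawa and Choi--Lee--Seo for a potential with constant $\abs{d^c\varphi}^2_\omega=\sL^2_\Omega/2$. You use $\log K$ itself and only need $\sup\abs{d^c\log K}^2_\omega=rc_\Omega/2$. This is more self-contained, but it rests on the claim that $\nabla\log K$ is tangent to the polydisc at $\sum_i t_ie_i$, which you only gesture at. The claim does hold:
\begin{itemize}
\item For each $i$, the element $\exp(2\pi\ii D(e_i,e_i))\in K$ (normalized so that $D(e_i,e_i)$ has eigenvalues $1,\tfrac12,0$) fixes $\sum_j t_je_j$.
\item It acts as the identity on every $V_{jk}$ with $j,k\neq i$, and as $-1$ on $V_{ij}$ $(j\neq i)$ and on $V_{i0}$.
\item $V_{00}=0$ because the frame is maximal.
\end{itemize}
So each off-diagonal Peirce space lies in the $(-1)$-eigenspace of an isometry that fixes the point and preserves $\log K$. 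This makes it orthogonal to the polydisc and annihilated by $d\log K$.

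Finally, the equality $\KM=\sL^2_\Omega/2$ does not come from Corollary~\ref{cor:main_corollary}, whose hypothesis fails here, as the paper's remark notes. It follows by combining your torus lower bound with $\norm{d^c\log K}^2_{L^\infty}=\sL^2_\Omega/2$.
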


The estimate \eqref{eqn:lower_bound_eta} was previously established in~\cite{Choi_Lee_Seo2025Ar} under the additional assumption that $\eta$ is $d^c$-closed.
The extra $d^c$-closedness condition was subsequently removed in~\cite{Cho_Choi_Lee2026Ar}.
In that work, the authors relied on McKean's lower bound estimate~\cite{McKean1970} for the bottom of the spectrum in Riemannian geometry and proved the result indirectly by a contrapositive argument.

In contrast, the current proof is elementary and direct, relying solely on Theorem~\ref{thm:main_theorem}.

\begin{proof}[Proof of Theorem~\ref{thm:main_application}]
	Let $\Omega$ be a bounded symmetric domain of rank $r$,
	and let $\omega$ be the Bergman metric, which is the complete \KE metric with Ricci curvature $-1$. 
	Let $\eta$ be a 1-form on $\Omega$ satisfying $d\eta=\omega$.
	
	By the argument of Section 4.1 in~\cite{Choi_Lee_Seo2025Ar}, there exists a totally geodesic holomorphic disc $\iota:\DD\hookrightarrow\Omega$ such that the pullback metric $\iota^*\omega$ has constant Gaussian curvature
	\begin{equation*}
		-\kappa=-\frac{2}{\sL_\Omega^2}<0.	
	\end{equation*}
	Hence $(\DD,\iota^*\omega)$ is the Poincar\'e metric of constant Gaussian curvature $-\kappa$, and therefore
	\begin{equation*}
		\iota^*\omega
		=
		\frac{2\ii dz\wedge d\ol{z}}{\kappa\paren{1-\abs{z}^2}^2}.
	\end{equation*}
	This metric admits a global potential $g:\DD\rightarrow\RR$ given by
	\begin{equation*}
		g=-\frac{1}{\kappa}\log\paren{1-\abs{z}^2}^2,
	\end{equation*}
	Then $g$ is a plurisubharmonic exhaustion function on $\DD$.
	Moreover, 
	\begin{align*}
		\abs{\partial g}^2_{\iota^*\omega}
		=
		\abs{\frac{2}{\kappa}\cdot\frac{\ol z}{1-\abs{z}^2}dz}^2_{\iota^*\omega}
		=
		\frac{2\abs{z}^2}{\kappa}
		\rightarrow\frac{2}{\kappa}
		\quad\text{as}\quad z\rightarrow\partial\DD.
	\end{align*}
	In particular,
	\begin{equation*}
		\liminf_{x\rightarrow\del\Omega}\abs{d^cg}^2_\omega(x)\ge\frac{1}{\kappa}=\frac{\sL_\Omega^2}{2}.
	\end{equation*}
	Since $\iota:\DD\rightarrow\Omega$ is an isometric embedding and $d\paren{\iota^*\eta}=\iota^*\omega$, Theorem~\ref{thm:main_theorem} yields
	\begin{equation*}
		\norm{\eta}^2_{L^\infty}
		=
		\sup_{\Omega}\abs{\eta}_\omega^2
		\ge
		\sup_{\iota(\DD)}\abs{\eta}_\omega^2
		=
		\sup_\DD\abs{\iota^*\eta}^2_{\iota^*\omega}\ge\frac{\sL_\Omega^2}{2}.
	\end{equation*}
	According to Theorem 1 in \cite{Kai_Ohsawa2007} and Theorem 1.1 in \cite{Choi_Lee_Seo2025Ar}, there exists a global potential $\varphi$ on $\Omega$ with $\omega=\ddc\varphi$ such that
	\begin{equation*}
		\abs{d^c\varphi}^2_\omega=\frac{\sL^2_\Omega}{2}.
	\end{equation*}
	This result yields the last assertion, concluding the proof.
\end{proof} 

\begin{remark}
The conclusion of Theorem~\ref{thm:main_application} does not follows from Corollary~\ref{cor:main_corollary}.
The proof of Theorem~\ref{thm:main_application} strongly relies on the geometry of bounded symmetric domains. In particular, it uses the following two geometric properties.
First, every bounded homogeneuous domain admits a global potential whose gradient length is constant~(\cite{Kai_Ohsawa2007}).
Second, any bounded symmetric domains contain a totally geodesic disc with maximal holomorphic sectional curvature~(\cite{Choi_Lee_Seo2025Ar}).
\end{remark}

\subsection{Strongly pseudoconvex domains with defining functions}
\begin{proposition}
\label{prop:strongly_pseudoconvex_domain}
Let $\Omega$ be a simply-connected bounded strongly pseudoconvex domain with a defining function $r$ satisfying $\iddbar r>0$ on $\ol\Omega$.
If $\omega$ be the complete K\"ahler metric defined by $\omega=-\iddbar\log(-r)$, then for any 1-form $\eta$ with $d\eta=\omega$ satisfies
\begin{equation*}
	\norm{\eta}^2_{L^\infty}\ge\frac{1}{2}.
\end{equation*}
In particular,
\begin{equation*}
	\KM(X,\omega)=\frac{1}{2}.
\end{equation*}
\end{proposition}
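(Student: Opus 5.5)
The plan is to apply Theorem~\ref{thm:main_theorem} with the potential $g=-\log(-r)$, and then use Corollary~\ref{cor:main_corollary} to obtain equality. With the normalization used for the ball, where $dd^c(-\log(1-\abs{z}^2))$ gives $\ii h_{\alpha\bar\beta}dz^\alpha\wedge dz^{\bar\beta}$, we have $dd^c=\iddbar$. Hence $g$ is a global potential with $\ddc g=\omega$.

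First, $g$ is an exhaustion of $\Omega$: $r<0$ in $\Omega$ and $r\to 0$ at $\partial\Omega$, so $g\to+\infty$ at the boundary. By hypothesis $\Omega$ is simply connected. I will take the completeness of $\omega$ from the statement; it is also standard, since the metric is comparable to the boundary-behaviour model and $\abs{dg}_\omega$ is bounded, as shown below.

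The main step is computing $\abs{\partial g}_\omega^2$. Write
\begin{equation*}
\omega=\frac{\iddbar r}{-r}+\frac{\ii\,\partial r\wedge\bar\partial r}{r^2}=A+\ii\, v\wedge\bar v ,
\end{equation*}
where $A=\iddbar r/(-r)>0$ and $v=\partial r/(-r)$. Note that $\partial g=-\partial r/r=v$. For a rank-one update of a hermitian form, the Sherman--Morrison formula gives
\begin{equation*}
\abs{v}^2_{A+\ii v\wedge\bar v}=\frac{t}{1+t},\qquad t:=\abs{v}^2_A=\frac{\abs{\partial r}^2_{\iddbar r}}{-r}.
\end{equation*}
Since $\iddbar r>0$ on $\ol\Omega$ and $dr\neq0$ on $\partial\Omega$ (because $r$ is a defining function), the quantity $\abs{\partial r}^2_{\iddbar r}$ is bounded below by a positive constant near $\partial\Omega$. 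Meanwhile $-r\to0$ there, so $t\to+\infty$ as $x\to\infty$. Therefore $\abs{\partial g}^2_\omega\to 1$, and so $\abs{d^cg}^2_\omega=\frac12\abs{\partial g}^2_\omega\to\frac12$. Theorem~\ref{thm:main_theorem} then gives $\norm{\eta}^2_{L^\infty}\ge\frac12$ for every $\eta$ with $d\eta=\omega$.

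For the last assertion, the same formula shows $\abs{d^cg}^2_\omega=\frac12\cdot\frac{t}{1+t}<\frac12$ everywhere in $\Omega$. Hence $\norm{d^cg}^2_{L^\infty}=\frac12=\liminf_{x\to\infty}\abs{d^cg}^2_\omega$, and Corollary~\ref{cor:main_corollary} gives $\KM=\frac12$.

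I expect the main obstacle to be the bookkeeping in the inverse-metric (Sherman--Morrison) computation, together with keeping the conventions consistent: the factor $\frac12$ relating $\abs{d^cg}^2$ to $\abs{\partial g}^2$, and the identification $dd^c=\iddbar$. To guard against errors, I will first check the computation against the ball case $r=\abs{z}^2-1$. There it should reproduce $\abs{d^cg}^2=\frac12\abs{z}^2$, and indeed $t=\abs{z}^2/(1-\abs{z}^2)$ gives $t/(1+t)=\abs{z}^2$.
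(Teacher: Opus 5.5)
Your proposal is correct and follows the paper's proof. The paper also takes the potential $w=-\log(-r)$ and explicitly inverts $w_{\alpha\bar\beta}=r_{\alpha\bar\beta}/(-r)+r_\alpha r_{\bar\beta}/r^2$, which is exactly your Sherman--Morrison step. It obtains $\abs{d^cw}^2_\omega=\frac12\abs{\partial r}^2/(\abs{\partial r}^2-r)\nearrow\frac12$ (your $\frac12\,t/(1+t)$) and concludes via Theorem~\ref{thm:main_theorem} and Corollary~\ref{cor:main_corollary}, as you do.
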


\begin{proof}
The global potential function $w:=-\log(-r)$ is clearly a strictly plurisubharmonic exhaustion function on $\Omega$.
A direct computation yields that 
\begin{equation*}
w_{\alpha\bar\beta}
=\frac{r_{\alpha\bar\beta}}{-r}+
\frac{r_\alpha r_{\bar\beta}}{r^2}\;,
\end{equation*}
and the inverse is 
\begin{equation*}
w^{\bar\beta\alpha}
	=(-r)\paren{r^{\bar\beta\alpha}
	+\frac{r^{\bar\beta}r^\alpha}{r-\abs{\partial r}^2}},
\end{equation*}
where
\begin{equation*}
(r^{\alpha\bar\beta})=
\paren{r_{\alpha\bar\beta}}^{-1},
\quad
r^\alpha
	=r^{\alpha\bar\beta}r_{\bar\beta},
\quad\text{and} \quad
\abs{\partial r}^2
	= r^{\alpha\bar\beta}r_\alpha r_{\bar\beta}.
\end{equation*}
Consequently, we have
\begin{equation}
\label{eqn:gradient_length_defining_function}
	\abs{d^cw}^2_\omega
	=
	\frac{1}{2}
	\abs{\partial w}^2_\omega
	=
	\frac{1}{2}
	w^{\bar\beta\alpha}w_\alpha w_{\bar\beta}
	=
	\frac{1}{2}\cdot\frac{\abs{\partial r}^2}{\abs{\partial r}^2-r}
	\nearrow
	\frac{1}{2}
	\quad\text{as}\quad x\rightarrow\partial\Omega,
\end{equation}
Therefore, Theorem~\ref{thm:main_theorem} implies the first assertion.
Since $\norm{d^cw}^2_{L^\infty}=1/2$, the second conclusion follows from Corollary~\ref{cor:main_corollary}.
\end{proof}

\begin{remark}
Li-Tran showed that the bottom of spectrum is exactly $\lambda_0(\Omega,\omega)=n^2/2$ under the assumption that $\Omega$ is a bounded strongly pseudoconvex domain equipped with the same metric defined as in Proposition~\ref{prop:strongly_pseudoconvex_domain} (\cite{Li_Tran2010}).
\end{remark}

\section{Application 2}

In this section, we discuss lower bounds for the K\"ahler hyperbolicity modulus of the K\"ahler-Einstein and Bergman metrics on simply connected bounded strongly pseudoconvex domains as an application of Theorem~\ref{thm:main_theorem}. 
We also treat the case of $2$-dimensional Thullen domains.

\subsection{K\"ahler-Einstein metrics}

\begin{theorem}
\label{thm:KE_case}
Let $\Omega$ be a simply-connected bounded strongly pseudoconvex domain in $\CC^n$ with smooth boundary equipped with K\"ahler-Einstein metric $\omega$ with Ricci curvature $-(n+1)$.
Then for any $1$-form $\eta$ satisfying $d\eta=\omega$, we have
\begin{equation*}
	\norm{\eta}^2_{L^\infty}\ge\frac{1}{2}.
\end{equation*}
In particular,
\begin{equation*}
	\KM(\Omega,\omega)\ge\frac{n^2}{2}.
\end{equation*}

\end{theorem}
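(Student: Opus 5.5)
The plan is to apply Theorem~\ref{thm:main_theorem} to a global potential of the \KE metric built from the Cheng--Yau solution. Since $\Omega$ is a bounded strongly pseudoconvex domain with smooth boundary, Cheng--Yau (with the boundary regularity of Lee--Melrose) gives a negative function $\rho$ on $\Omega$ with the following properties:
\begin{equation*}
\omega=-\iddbar\log(-\rho),\qquad \rho\in C^{n+1-\epsilon}(\ol\Omega)\ (\text{at least } C^2),\qquad \rho|_{\partial\Omega}=0,\qquad d\rho\neq0 \text{ on } \partial\Omega.
\end{equation*}
Concretely, $\rho=r e^{-\phi/(n+1)}$ for a smooth defining function $r$ and a correction $\phi$ that is bounded and regular enough up to the boundary. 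I set $g:=-\log(-\rho)$. Then $\ddc g=\omega$ (with the paper's normalization), and $g\to+\infty$ at $\partial\Omega$, so $g$ is an exhaustion. Simple connectivity is part of the hypothesis, so conditions~(1) of Theorem~\ref{thm:main_theorem} and the topological assumption both hold.

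The key step is condition~(2), namely computing $\liminf_{x\to\partial\Omega}\abs{d^cg}^2_\omega$. As in the proof of Proposition~\ref{prop:strongly_pseudoconvex_domain},
\begin{equation*}
g_{\alpha\bar\beta}=\frac{\rho_{\alpha\bar\beta}}{-\rho}+\frac{\rho_\alpha\rho_{\bar\beta}}{\rho^2}.
\end{equation*}
Strong pseudoconvexity and $\rho\in C^2(\ol\Omega)$ with $d\rho\ne0$ on $\partial\Omega$ imply that $\rho_{\alpha\bar\beta}$ is positive definite in a neighbourhood of $\partial\Omega$. If necessary, I first replace $\rho$ near the boundary by $\rho+A\rho^2$, which changes $\omega$ only by a bounded-ratio perturbation; to avoid even that, I will simply check positivity on the complex tangent and control the normal direction by the dominant $\rho_\alpha\rho_{\bar\beta}/\rho^2$ term. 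The Sherman--Morrison inversion then gives, near $\partial\Omega$,
\begin{equation*}
\abs{d^cg}^2_\omega=\frac12\abs{\partial g}^2_\omega=\frac12\cdot\frac{\abs{\partial\rho}^2}{\abs{\partial\rho}^2-\rho}\longrightarrow\frac12 .
\end{equation*}
This holds because $\abs{\partial\rho}^2$, taken with respect to $\rho_{\alpha\bar\beta}$, stays bounded below near $\partial\Omega$ while $\rho\to0$. Theorem~\ref{thm:main_theorem} with $C=1/2$ then yields $\norm{\eta}^2_{L^\infty}\ge 1/2$ for every primitive $\eta$ of $\omega$. I expect the main obstacle to be exactly this boundary computation, since only finite regularity of $\rho$ is available; the argument needs only $C^2$ up to the boundary together with the nondegeneracy of $d\rho$ there.

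For the ``in particular'' assertion $\KM(\Omega,\omega)\ge n^2/2$, the route I would take is to pass from the pointwise bound on primitives to $\KM$ by taking the infimum over $\eta$. That step directly yields $\KM\ge 1/2$. To reach the constant $n^2/2$, I would then try to combine this with the spectral inequality~\eqref{eqn:bottom_spectrum_estimate}, comparing with the model ball through the boundary asymptotics of $\omega$, since $\lambda_0=n^2/2$ for this normalization (compare the Remark after Proposition~\ref{prop:strongly_pseudoconvex_domain}). I flag this last step as the doubtful one. On $\bB^n$, where the \KE metric with Ricci curvature $-(n+1)$ has $\KM=1/2$, the displayed constant $n^2/2$ cannot hold for $n\ge2$. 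So the chain of inequalities I can actually justify ends at $\KM(\Omega,\omega)\ge\frac12$, and I expect the stated $n^2/2$ to reflect the normalization in which $\lambda_0=n^2/2$ rather than $\KM$ itself.
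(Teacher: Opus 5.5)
Your overall strategy matches the paper's: feed the Cheng--Yau potential of $\omega$ into Theorem~\ref{thm:main_theorem} and show $\abs{\partial g}^2_\omega\to1$ at $\partial\Omega$. The boundary computation is organized differently, though. The paper writes $g=w+u$ with $w=-\log(-r)$ for a Fefferman defining function. It then controls $\abs{\partial g}^2_\omega$ through the derivative bounds of Theorem~\ref{T:CY2} and the inverse-metric perturbation of Lemma~\ref{L:identity}. You instead absorb $u$ into $\rho=re^{-u}=-e^{-g}$ and use only that $\rho\in C^2(\ol\Omega)$ with $d\rho\neq0$ on $\partial\Omega$. That regularity follows directly from Theorem~\ref{T:CY2}, since $u\to0$ and $Dr\otimes Du$, $rD^2u$ are $O(\abs r^{n-1/2-\varepsilon})$. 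Your route is shorter and dispenses with Lemma~\ref{L:identity}.

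The step that fails as written is the claim that strong pseudoconvexity plus $C^2$ regularity makes $\rho_{\alpha\bar\beta}$ positive definite near $\partial\Omega$.
\begin{itemize}
\item Strong pseudoconvexity controls only the complex tangential block. For example, $\rho-A\rho^2$ defines the same domain but has a negative normal--normal entry on $\partial\Omega$ for large $A$.
\item For your specific $\rho=-e^{-g}$ one has $\rho_{\alpha\bar\beta}=(-\rho)\paren{g_{\alpha\bar\beta}-g_\alpha g_{\bar\beta}}$. So positivity is equivalent to $\abs{\partial g}^2_\omega<1$, which nothing gives you. On the Siegel model, the exact $\rho=\abs{z'}^2-2\,\mathrm{Re}\,z_n$ has $\rho_{n\bar n}\equiv0$.
\end{itemize}
Hence Sherman--Morrison and ``$\abs{\partial\rho}^2$ stays bounded below'' are unjustified. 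Your first fallback $\rho\mapsto\rho+A\rho^2$ changes the potential and hence the metric. A merely bounded-ratio change does not preserve the constant in Theorem~\ref{thm:main_theorem}, which must be applied to $\omega$ itself. You would additionally have to show the new metric is $(1+o(1))\omega$ and transfer the gradient estimate back.

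The simpler repair is what your second fallback hints at, and it needs no inversion, because only a lower bound on the $\liminf$ is required. For any $\xi\neq0$, Cauchy--Schwarz gives $\abs{\partial g}^2_\omega\ge\abs{g_\alpha\xi^\alpha}^2/(g_{\alpha\bar\beta}\xi^\alpha\ol{\xi^\beta})$. Taking $\xi^\alpha=\ol{\rho_\alpha}$, this reads
\[
\abs{\partial g}^2_\omega\ \ge\ \frac{\abs{\partial\rho}_e^4}{\abs{\partial\rho}_e^4+(-\rho)\,\rho_{\alpha\bar\beta}\xi^\alpha\ol{\xi^\beta}}\ \longrightarrow\ 1,
\qquad \abs{\partial\rho}_e^2=\sum_\alpha\abs{\rho_\alpha}^2 .
\]
The limit holds because $\rho_{\alpha\bar\beta}$ is bounded, $\abs{\partial\rho}_e$ is bounded below near $\partial\Omega$, and $\rho\to0$. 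The denominator equals $\rho^2g_{\alpha\bar\beta}\xi^\alpha\ol{\xi^\beta}>0$ automatically. This yields $\liminf\abs{d^cg}^2_\omega\ge\frac12$, and Theorem~\ref{thm:main_theorem} applies.

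Your reading of the ``in particular'' clause is correct. The paper's proof likewise yields only $\norm{\eta}^2_{L^\infty}\ge\frac12$, hence $\KM(\Omega,\omega)\ge\frac12$. For $n\ge2$ the displayed bound $\KM\ge n^2/2$ contradicts $\KM(\bB^n,\omega)=\frac12$, computed in Section~\ref{sec:main_theorem}. The value $n^2/2$ is just $n^2/(4C)$ at $C=\frac12$, i.e.\ the quantity appearing in \eqref{eqn:bottom_spectrum_estimate}. So drop the attempted comparison through $\lambda_0$ rather than try to prove it.
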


\begin{proof}
By Theorem~\ref{thm:main_theorem}, it is enough to show that there exists a plurisubharmonic exhaustion function $g:\Omega\rightarrow\RR$ such that $\iddbar g=\omega$ and
\begin{equation*}
\liminf_{x\rightarrow\del\Omega}\abs{\del g}^2_\omega(x)\ge 1.
\end{equation*}
In fact, this function comes from the solution of complex Monge-Amp\`ere equation which yields the complete K\"ahler-Einstein metric on $\Omega$.
Then the boundary behavior of this solution, established by Cheng and Yau~\cite{Cheng_Yau1980}, gives the desired estimate.
Although this was already obtained in~\cite{Choi_Lee2023}, we give a sketch of the proof  for the readers' convenience.

Let $\Omega$ be a smooth bounded strongly pseudoconvex domain in $\CC^n$. 
It is well known that there exists a defining function $r\in C^\infty(\ol\Omega)$ such that the Levi form $\paren{r_{\alpha\bar\beta}}$ is strictly positive on $\overline\Omega$.
Then $\iddbar w$ where $w=-\log(-r)$ gives a complete K\"ahler metric on $\Omega$.
Moreover, the Ricci tensor of $w_{\alpha\bar\beta}$ is given by
\begin{equation}\label{E:compatible}
\begin{aligned}
R_{\alpha\bar\beta} 
	& =-\pd{^2}{z^\alpha\partial{z}^{\bar\beta}} \log\det(w_{\gamma\bar\delta}) \\
	& = -(n+1)w_{\alpha\bar\beta}
	-\frac{\partial^2}{\partial{z^\alpha}\partial{z^{\bar\beta}}}
	\log\det(r_{\gamma\bar\delta})(-r+\abs{\partial r}^2).
\end{aligned}
\end{equation}
If we denote by $F=\log\det(r_{\alpha\bar\beta})(-r+\abs{\partial r}^2)$, then $F$ is a positive smooth function in $\overline\Omega$ satisfying
\begin{equation*}
R_{\alpha\bar\beta}
+(n+1)w_{\alpha\bar\beta}
=
\frac{\partial^2F}{\partial{z^\alpha}\partial{z^{\bar\beta}}}.
\end{equation*}
According to the Cheng-Yau Theorem, there exists a unique solution of the following complex Monge-Ampere equation:
\begin{equation} \label{E:Monge-Ampere'}
\begin{aligned}
&\det(w_{\alpha\bar\beta}+u_{\alpha\bar\beta})
=e^{(n+1)u}e^F\det(w_{\alpha\bar\beta}) \\
&\quad\frac{1}{c}(w_{\alpha\bar\beta})\le(w_{\alpha\bar\beta}+u_{\alpha\bar\beta}) \le c(w_{\alpha\bar\beta}).
\end{aligned}
\end{equation}
Then it is easy to see that $g:=w+u$ is a potential of the unique complete \KE metric on $\Omega$.
In particular, $g$ is a plurisubharmonic exhaustion function.
In \cite{Fefferman1974}, Fefferman constructed a defining function $r$ satisfying 
\begin{equation*}
F
=
\log\det(r_{\alpha\bar\beta})(-r+\abs{\partial r}^2)
=
O(\abs{r}^{n+1})
\end{equation*}
Using this defining function, Cheng and Yau computed the boundary behavior of $u$.

\begin{theorem}[Simple Version \cite{Cheng_Yau1980}] \label{T:CY2}
Let $\Omega$ be a smooth strongly pseudoconvex domain in $\CC^n$ and let $r$ be a smooth defining function of $\Omega$. Suppose that $F=O(\abs r^{n+1})$ and $u$ is a solution of \eqref{E:Monge-Ampere'}. Then
\begin{equation*}
\abs{D^pu}(x)=O(\abs r^{n+1/2-p-\varepsilon})
\quad\text{for}\quad \varepsilon>0
\end{equation*}
where $\abs{D^pu}(x)$ is the Euclidean length of the $p$-th derivative of $u$.
\end{theorem}
In particular Theorem \ref{T:CY2} says that
\begin{equation} \label{E:boundary}
\abs{u_\alpha}= O(\abs r^{n-1/2-\varepsilon})
\quad\text{and}\quad
\abs{u_{\bar\beta}}= O(\abs r^{n-1/2-\varepsilon})
\end{equation}
for $\varepsilon>0$ and $1\le\alpha,\beta\le{n}$.

Before computing the boundary behavior of $\abs{\partial g}^2_\omega$, we introduce the following lemma.
\begin{lemma}[Lemma 5.3 in \cite{Choi2015A}] \label{L:identity}
There exists a hermitian $n\times{n}$ matrix 
\begin{equation*}
N=(N_{\alpha\bar\beta})
	\in\mathrm{Mat}_{n\times{n}}\paren{C^\infty(\Omega)\cap{C}^{n-3/2-\varepsilon}
	(\overline \Omega)}
\end{equation*}
with $\norm{N}=O(\abs r^{n-3/2-\varepsilon})$ for $\varepsilon>0$, which satisfies that
\begin{equation*}
g^{\bar\beta\alpha}-w^{\bar\beta\alpha}
	=w^{\bar\beta\gamma}N_{\gamma\bar\delta}w^{\bar\delta\alpha}.
\end{equation*}
In particular, $g^{\bar\beta\alpha}\in{C}^\infty(\Omega)\cap{C}^{n-3/2-\varepsilon}(\overline\Omega)$ and $g^{\bar\beta\alpha}=O(\abs r)$ for $\varepsilon>0$.
\end{lemma}

Now we consider the boundary behavior of $\abs{\partial\varphi}_\omega^2$ near the boundary.
By Lemma \ref{L:identity}, we have
\begin{align*}
\abs{\partial g}^2_\omega
&=
g_\alpha g_{\bar\beta}g^{\alpha\bar\beta}
=
(w+u)_\alpha(w+u)_{\bar\beta}g^{\alpha\bar\beta}
\\
&=
(w+u)_\alpha(w+u)_{\bar\beta}
\paren{w^{\bar\beta\alpha}+w^{\bar\beta\gamma}N_{\gamma\bar\delta}w^{\bar\delta\alpha}}
\\
&=
w_\alpha w_{\bar\beta}w^{\alpha\bar\beta}
	+
	w_\alpha u_{\bar\beta}w^{\bar\beta\alpha}
	+
	u_\alpha w_{\bar\beta}w^{\bar\beta\alpha}
	+
	(w+u)_\alpha(w+u)_{\bar\beta}
	w^{\bar\beta\gamma}N_{\gamma\bar\delta}w^{\bar\delta\alpha}.
\end{align*}
From \eqref{eqn:gradient_length_defining_function}, \eqref{E:boundary}, and Lemma~ \ref{L:identity}, we have the following boundary estimates:
\begin{equation*}
w_\alpha=O(\abs r^{-1}),\;\;
w^{\bar\beta\alpha}=O(\abs r),\;\;
u_\alpha=O(\abs r^{n-1/2-\varepsilon}),
\quad\text{and}\quad
N^{\gamma\bar\delta}
=O(\abs r^{n-3/2-\varepsilon}).
\end{equation*}
Consequently,
\begin{equation*}
\abs{\partial g}^2_\omega
=
\frac{\abs{\partial r}^2}{\abs{\partial r}^2-r}
+O(\abs r).
\end{equation*}
In particular,
\begin{equation*}
\abs{\partial g}^2_\omega\rightarrow1
\quad
\text{as}
\quad 
z\rightarrow\partial\Omega.
\end{equation*}
This completes the proof.
\end{proof}

\subsection{Bergman metrics}

\begin{theorem}
Let $\Omega\subset\CC^n$ be a simply-connected bounded strongly pseudoconvex domain with smooth boundary, equipped with the Bergman metric $\omega$.
For any $1$-form $\eta$ satisfying $d\eta=\omega$, the following estimate holds:
\begin{equation*}
	\norm{\eta}^2_{L^\infty}\ge\frac{n+1}{2}.
\end{equation*}
In particular,
\begin{equation*}
	\KM(\Omega,\omega)\ge\frac{n^2}{2(n+1)}.
\end{equation*}
\end{theorem}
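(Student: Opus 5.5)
We apply Theorem~\ref{thm:main_theorem} with the natural global potential of the Bergman metric, namely $g:=\log K_\Omega(z,z)$, where $K_\Omega$ is the Bergman kernel. Then $\ddc g=\omega$ up to the normalization used throughout the paper. It suffices to check two things: that $g$ is an exhaustion of $\Omega$, and that
\begin{equation*}
\liminf_{x\to\partial\Omega}\abs{\partial g}^2_\omega(x)\ge n+1,
\end{equation*}
since then $\liminf\abs{d^cg}^2_\omega\ge (n+1)/2$. The final inequality for $\KM$ then follows by taking the infimum over $\eta$.

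For both properties we use Fefferman's asymptotic expansion of the Bergman kernel on a smooth bounded strongly pseudoconvex domain:
\begin{equation*}
K_\Omega(z,z)=\frac{\phi(z)}{(-r)^{n+1}}+\psi(z)\log(-r),
\end{equation*}
with $\phi,\psi\in C^\infty(\ol\Omega)$ and $\phi>0$ on $\partial\Omega$. Here $r$ is a defining function, which we may take with $\iddbar r>0$ on $\ol\Omega$. This gives
\begin{equation*}
g=(n+1)w+\log\phi+\log\bigl(1+\phi^{-1}\psi(-r)^{n+1}\log(-r)\bigr),
\qquad w=-\log(-r).
\end{equation*}
So $g=(n+1)w+h$, where $h$ is continuous up to $\ol\Omega$. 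Its first and second derivatives are $O(1)$ and $O(\abs r^{-1}\cdot\abs r^{n}\log)$-type respectively, and in any case they are small relative to those of $w$. Hence $g\to+\infty$ at $\partial\Omega$, so $g$ is an exhaustion.

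For the gradient, we compare the Bergman metric $g_{\alpha\bar\beta}=(n+1)w_{\alpha\bar\beta}+h_{\alpha\bar\beta}$ with $(n+1)w_{\alpha\bar\beta}$, in the same way Lemma~\ref{L:identity} was used in the K\"ahler--Einstein case. Since $w^{\bar\beta\alpha}=O(\abs r)$ in the normal direction and $h_{\alpha\bar\beta}$ grows strictly slower than $w_{\alpha\bar\beta}$, we can write
\begin{equation*}
g^{\bar\beta\alpha}=\tfrac{1}{n+1}\paren{w^{\bar\beta\alpha}+w^{\bar\beta\gamma}N_{\gamma\bar\delta}w^{\bar\delta\alpha}}
\end{equation*}
with $N$ small. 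We then expand
\begin{equation*}
\abs{\partial g}^2_\omega=g^{\bar\beta\alpha}\bigl((n+1)w_\alpha+h_\alpha\bigr)\bigl((n+1)w_{\bar\beta}+h_{\bar\beta}\bigr).
\end{equation*}
The main term is
\begin{equation*}
(n+1)\,w^{\bar\beta\alpha}w_\alpha w_{\bar\beta}=(n+1)\frac{\abs{\partial r}^2}{\abs{\partial r}^2-r}\to n+1,
\end{equation*}
by \eqref{eqn:gradient_length_defining_function}. The cross terms are of the form $w^{\bar\beta\alpha}w_\alpha h_{\bar\beta}=O(\abs r\cdot\abs r^{-1}\cdot 1)$.

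The main obstacle is controlling the remainder, in particular the cross term. The crude bound $O(\abs r\cdot\abs r^{-1}\cdot 1)$ is only $O(1)$, not $o(1)$. To improve it, we use the finer structure $w^{\bar\beta\alpha}w_{\alpha}=r^{\bar\beta}/(\abs{\partial r}^2-r)$, which is bounded. Then $w^{\bar\beta\alpha}w_\alpha h_{\bar\beta}$ is bounded but not obviously vanishing. Since only a lower bound is needed, we choose the defining function so that $h_{\bar\beta}$ vanishes on the boundary. Concretely, we replace $r$ by $r\phi^{-1/(n+1)}$, which absorbs $\phi$. After this replacement $h=\log(1+O(\abs r^{n+1}\log\abs r))$, so $h_\alpha=O(\abs r^{n}\log\abs r)$ and $h_{\alpha\bar\beta}=O(\abs r^{n-1}\log\abs r)$.

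The difficulty is that this new $r$ need not have $\iddbar r>0$ on all of $\ol\Omega$. We handle this by noting that strong pseudoconvexity is only needed near $\partial\Omega$ for the boundary asymptotics: the formula for $w^{\bar\beta\alpha}$ holds wherever $(r_{\alpha\bar\beta})$ is invertible, and this holds near $\partial\Omega$ after adding $Cr^2$ if necessary. With this choice all remainder terms are $o(1)$ when $n\ge1$; the case $n=1$ needs a brief check that the log factor still gives decay. Hence $\abs{\partial g}^2_\omega\to n+1$, and Theorem~\ref{thm:main_theorem} yields $\norm{\eta}^2_{L^\infty}\ge (n+1)/2$.
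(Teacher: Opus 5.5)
Your route is the paper's: Fefferman's expansion, $g=\log K=(n+1)w+h$, comparison of $g^{\bar\beta\alpha}$ with $\frac{1}{n+1}w^{\bar\beta\alpha}$ as in Lemma~\ref{L:identity}, and absorption of $\phi$ into the defining function (your exponent $-1/(n+1)$ is the correct one). The conclusion is right, but the ``main obstacle'' paragraph rests on a miscomputation. From the inverse formula for $w^{\bar\beta\alpha}$ and $w_\alpha=-r_\alpha/r$,
\[
w^{\bar\beta\alpha}w_\alpha=r^{\bar\beta}\Bigl(1+\frac{\abs{\partial r}^2}{r-\abs{\partial r}^2}\Bigr)=\frac{(-r)\,r^{\bar\beta}}{\abs{\partial r}^2-r}=O(\abs r),
\]
not $r^{\bar\beta}/(\abs{\partial r}^2-r)$. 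On the ball it equals $(1-\abs z^2)z^{\bar\beta}$. The reason is that $\partial w$ is essentially normal, and in that direction the inverse metric is $O(r^2)$. Hence $w^{\bar\beta\alpha}w_\alpha h_{\bar\beta}=O(\abs r)$ for any $h$ with bounded gradient, and likewise $w^{\bar\beta\alpha}h_\alpha h_{\bar\beta}=O(\abs r)$. The $N$-term is contracted on both sides with $w^{\bar\beta\alpha}g_\alpha=O(\abs r)$, so it is $O\bigl(\abs r^2\norm{(h_{\alpha\bar\beta})}\bigr)=O\bigl(\abs r^2(1+\abs{\log\abs r})\bigr)$. So you can keep the original $r$ with $\iddbar r>0$ on $\ol\Omega$, leave $\log\phi$ inside $h$, and still get $\abs{\partial g}^2_\omega\to n+1$ for every $n\ge1$. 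Neither the renormalization nor a separate check for $n=1$ is needed.

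Conversely, had the obstacle been real, your patch would undo itself. Replacing $\tilde r=r\phi^{-1/(n+1)}$ by $\hat r=\tilde r+C\tilde r^2$ rewrites the leading term as $(1+C\tilde r)^{n+1}(-\hat r)^{-(n+1)}$. So $h$ picks up $(n+1)\log(1+C\tilde r)$, whose gradient is $(n+1)C\,\partial\tilde r\neq0$ on $\partial\Omega$ and whose complex Hessian is $O(1)$ without vanishing there. That is exactly what the normalization was meant to remove, and your crude bounds would then no longer show that the cross term or the $N$-term is $o(1)$. As written, therefore, the remainder estimate is not justified. With the correct contraction above it is, and the argument becomes simpler than the paper's. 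The paper needs the renormalization only because it works with the crude bounds $w_\alpha=O(\abs r^{-1})$ and $w^{\bar\beta\alpha}=O(\abs r)$, and it passes over the positivity issue for the new defining function that you rightly noticed.
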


\begin{proof}
By Theorem~\ref{thm:main_theorem}, it suffices to prove the existence of a plurisubharmonic exhaustion function $g:\Omega\rightarrow\RR$ satisfying $\iddbar g=\omega$ and
\begin{equation*}
\liminf_{x\rightarrow\del\Omega}\abs{\del g}^2_\omega(x)\ge 1.
\end{equation*}
Fefferman's asymptotic expansion of the Bergman kernel near strongly pseudoconvex boundary points \cite{Fefferman1974} implies that there exist $\varphi,\psi\in C^\infty(\ol\Omega)$ such that the Bergman kernel $K$ can be expressed as
\begin{equation*}
	K=\frac{\varphi}{(-r)^{n+1}}+\psi\log(-r),
\end{equation*}
where $r$ is a defining function for $\Omega$.
Since we are looking near the boundary, we may assume that $\varphi>0$.
In terms of a new defining function $\tilde r:=r\varphi^{\frac{1}{n+1}}$, we can rewrite $K$ as
\begin{equation*}
	K=
	\frac{1}{\paren{-\tilde r}^{n+1}}
	+
	\psi\log\paren{-\tilde r}\cdot\paren{\varphi^{\frac{1}{n+1}}}^{-1}
\end{equation*}
Consequently, the potential function $g:=\log K$ of the Bergman metric satisfies
\begin{equation*}
	g
	=
	-(n+1)\log(-r)
	+
	\log\paren{1+(-r)^{n+1}\paren{\psi\log(-r)-\frac{1}{n+1}\log\varphi}}
	=:
	(n+1)w+u.
\end{equation*}
It is straightforward to verify that $u$ satisfies the boundary estimate in Theorem~\ref{T:CY2}.
Therefore, applying the same argument as in the proof of theorem~\ref{thm:KE_case}, we have 
\begin{equation*}
\abs{\partial g}^2_\omega(x)\rightarrow(n+1)
\quad
\text{as}\quad x\rightarrow\partial\Omega,
\end{equation*}
which completes the proof.
\end{proof}

\subsection{Thullen domains in $\CC^2$}

Let $E_m$ be the Thullen domain in $\CC^2$ defined by
\begin{equation*}
	E_m:=\set{(z,w):\abs{z}^2+\abs{w}^{2m}<1},
\end{equation*}
which is a weakly pseudoconvex domain in $\CC^2$.
Then the function $r(z)=\abs{w}^2-\paren{1-\abs{z}^2}^{1/m}$ is a strictly plurisubharmonic defining function such that 
$$
r \in C^\infty(E_m)\bigcap C^{1/m}(\ol{E_m}).
$$
If we denote by $g:=-\log(-r)$, then the computation in~\cite{Li_Tran2010} says that 
\begin{equation*}
	\abs{\partial g}^2_\omega
	=
	1+r\cdot\frac{m-\abs{z}^2}{m\paren{1-\abs{z}^2}^{1/m}}
\end{equation*}
Then
\begin{eqnarray*}
	\abs{\partial g}^2_\omega
	&=&
	1+
	\paren{\abs{w}^2-\paren{1-\abs{z}^2}^{1/m}}
	\cdot\frac{m-\abs{z}^2}{m\paren{1-\abs{z}^2}^{1/m}}
	\\
	&\ge&
	1-\paren{1-\abs{z}^2}^{1/m}
	\cdot\frac{m-\abs{z}^2}{m\paren{1-\abs{z}^2}^{1/m}}
	\\
	&=&
	1-\frac{m-\abs{z}^2}{m},
\end{eqnarray*}
which implies that
\begin{equation*}
\liminf_{x\rightarrow\del\Omega}\frac{1}{2}\abs{dg}^2_\omega(x)
=
\liminf_{x\rightarrow\del\Omega}\abs{\del g}^2_\omega(x)
\ge\frac{1}{m}.
\end{equation*}
Therefore, Theorem~\ref{thm:main_theorem} implies that the K\"ahler hyperbolicity modulus satisfies that
\begin{equation*}
\KM(E_m,\iddbar g)\ge1/m.
\end{equation*}


\end{document}